\documentclass[11pt]{article}

\usepackage{amsmath,amssymb,amsthm}
\usepackage{natbib}
\usepackage{geometry}
\usepackage{bm}
\usepackage{enumitem}
\usepackage{mathrsfs}
\usepackage{setspace}
\usepackage[colorlinks=true,linkcolor=blue,citecolor=blue,urlcolor=blue]{hyperref}

\usepackage{authblk}
\usepackage{orcidlink}

\newtheorem{theorem}{Theorem}
\newtheorem{proposition}{Proposition}

\newcommand{\Rr}{\mathbb R}
\newcommand{\calA}{\mathcal A}

\newcommand{\calZ}{\mathcal Z}
\newcommand{\calS}{\mathcal S}
\newcommand{\calP}{\mathcal P}

\newcommand{\ind}{\mathbf 1}

\title{A Leakage Bound for Confidence Sets after Black-Box Selection}

\author{Sayantan Banerjee \orcidlink{0000-0001-5414-4817}}
\affil{OM \& QT Area\\Indian Institute of Management Indore
}

\date{Email: \texttt{sayantanb@iimidr.ac.in}}

\begin{document}
	
	\maketitle
	
	\begin{abstract}
		In many analyses the object reported at the end is not fixed in advance, but is chosen after a preliminary search over variables, subgroups, transformations, models or contrasts. Classical selective-inference methods are most effective when this search can be written as an explicit selection event. This note treats the less structured case in which the selection rule is a black box and inference is required for the target indexed by the selected object. We show that, for any fixed-target confidence procedure, selected-target noncoverage is bounded by the nominal fixed-target noncoverage plus the average total variation distance between the marginal law of the inferential data and its conditional law given the selected object. A mutual-information bound follows immediately. The result recovers sample splitting as the zero-leakage case and gives explicit guarantees for noisy screening through a Gaussian information bound. Thus the inferential cost of black-box selection is quantified by the information that the selected object carries about the inferential sample.
	\end{abstract}
	
	\noindent
	{\bf Keywords:}
	Black-box selection; confidence coverage; information leakage; post-selection inference; selected target; selective inference.
	
\section{Introduction}

Classical confidence statements are usually justified for a parameter, contrast or hypothesis specified before the data are examined. In many current analyses this order is reversed. A preliminary screening stage first selects variables, subgroups, outcomes, transformations, models or contrasts, and formal inference is then reported only for the selected object. The screening stage may be an algorithmic search, a machine-learning pipeline, proprietary software, or a human analyst interacting with the data. If the selection step is ignored, the resulting confidence intervals and tests need not have their nominal interpretation.

The effect of data-dependent selection on inference is well known. \citet{leeb2005model} showed that model selection can have a substantial impact on subsequent inference, even asymptotically. \citet{berk2013valid} proposed universally valid post-selection inference (PoSI) by protecting against all possible selected submodels. The PoSI approach protects against a large class of possible selected models by widening intervals; the present note instead keeps the fixed-target procedure and charges the dependence between the selected object and the inferential sample. A complementary line of work develops selective inference conditional on the event that a specified procedure has selected a given model or hypothesis; see, among others, \citet{fithian2014optimal}, \citet{lee2016exact}, \citet{taylor2015statistical} and \citet{tibshirani2016exact}. Randomization has also been used to smooth selection events and improve selective inference \citep{tian2018selective}. These approaches are most effective when the selection mechanism is mathematically available.

The present note is motivated by screening rules that are not available in a tractable mathematical form. The rule may be stochastic, proprietary, adaptive over several analyst interactions, or based on an external black-box system. In such cases it may be unrealistic to condition on the exact selection event, or even to write that event down. The question is then not how to adjust for a particular known selection rule, but what can be guaranteed when the rule is treated only through its output.

Let \(D\) denote the data used for final inference. A screening rule returns a selected object \(S^\star\) in a class \(\calS\). For each fixed \(s\in\calS\), let \(\theta_s(P)\) be the corresponding population target, and let \(C_s(D)\) be a confidence set computed from \(D\). The selected target is \(\theta_{S^\star}(P)\). Thus, after subgroup screening, the target is the population quantity for the selected subgroup; after variable screening, it may be the population projection parameter in the selected model. The target is random through the selected object.

The main result is a finite-sample coverage inequality. Suppose that, before screening, the confidence procedure has noncoverage probability at most \(\alpha\) for every fixed object that could be selected. After screening, the target itself is random because it is indexed by the selected object. We prove that
\[
\text{selected-target noncoverage}
\leq
\alpha+\text{information leakage}.
\]
The leakage term is the total variation distance, averaged over the screening output, between the original law of the inferential data and its conditional law after the selected object is known. A mutual-information bound gives a simpler, coarser version. Thus sample splitting is the zero-leakage case, while noisy, randomized or information-limited screening gives intermediate guarantees.

A related line of work studies validity under adaptive data analysis. \citet{dwork2015reusable} showed how a holdout sample can be reused while controlling overfitting, using ideas from algorithmic stability and privacy. \citet{bassily2016algorithmic} developed stability-based guarantees for adaptive queries, while \citet{russo2016controlling} and \citet{xu2017information} gave mutual-information bounds for bias and generalization error. These papers show that the information an adaptive procedure extracts from the data is a useful measure of its inferential cost. The distinction here is the object being controlled. Russo--Zou-type results control the bias of an adaptively chosen statistic, whereas the present result controls the probability of a selected-target noncoverage event. This event depends simultaneously on the selected object, the target attached to that object, and the confidence set reported after selection.

The contribution of the paper is a finite-sample selected-target coverage bound for settings in which the selection rule is unavailable as a mathematical object. The bound identifies the exact quantity that selection adds to the usual fixed-target error: the change in the law of the inferential data after the selected object is known. This formulation is useful because it does not require the geometry or probability of a selection event. It only requires control of the joint law of the selected object and the data used for inference. In this sense, sample splitting, noisy screening and compressed screening are different ways of controlling the same leakage term.

This perspective gives a design rule for black-box analyses. If the selected object is formed from data independent of the inferential sample, the leakage term vanishes. If the selected object is formed from a noisy or compressed summary, the leakage can be bounded in terms of the information carried by that summary. If the same inferential data are reused without restriction, the bound gives no nominal protection. The result therefore complements selective-inference methods that condition on a known selection event, while covering cases in which that event is not available in a tractable form.

The rest of the paper is organized as follows. Section~\ref{sec:setup} introduces selected targets and leakage. Section~\ref{sec:leakage} proves the main leakage bound and a finite sharpness example. Section~\ref{sec:noisy} gives a noisy-screening consequence and points to a finite-message variant in Appendix~\ref{app:compressed}. Section~\ref{sec:illustration} gives a small numerical illustration. Section~\ref{sec:discussion} discusses interpretation and limitations. Proofs are given in Appendix~\ref{app:proofs}.

\section{Selected targets and leakage}
\label{sec:setup}

Let \(D\) denote the data used to construct the final confidence set. In the simplest case, $D=(Z_1,\ldots,Z_m),$ where \(Z_1,\ldots,Z_m\) are independent observations from an unknown distribution \(P\in\calP\) on a measurable space \(\calZ\). The results below use only the law of \(D\) under \(P\), and therefore also apply when \(D\) is one fold or one
holdout part of a larger dataset.

A screening rule returns a random element $S^\star\in\calS,$ where \(\calS\) is a measurable selection space. An element \(s\in\calS\) may
represent a selected subgroup, model, variable set, transformation, contrast, outcome or hypothesis. We do not assume that the rule producing \(S^\star\) is available in analytic form. The theory uses only the joint law of \((S^\star,D)\) under \(P\).

For each fixed \(s\in\calS\), let $\theta_s(P)\in\Rr$ be the population target attached to \(s\). After screening, the inferential target is $
\theta_{S^\star}(P).$ Thus the target is allowed to be random through the selected object. This is the selected-target formulation: after subgroup screening, the target is the population quantity for the selected subgroup; after model screening, it is the parameter associated with the selected model.

For each fixed \(s\), let \(C_s(D)\) be a confidence set for \(\theta_s(P)\). The fixed-target noncoverage requirement is
\begin{equation}
	\label{eq:fixed-noncoverage}
	\mathrm{pr}_P\{\theta_s(P)\notin C_s(D)\}\leq \alpha,
	\qquad s\in\calS .
\end{equation}

This is a pointwise fixed-target condition. It does not assert simultaneous coverage over all \(s\in\mathcal S\). If several selected targets are reported together, the fixed-target procedure must first provide the corresponding simultaneous guarantee.

To measure this dependence, write $\mu_P=\mathcal L_P(D)$ for the marginal law of the inferential data, and let $\mathcal L_P(D\mid S^\star)$ denote a regular conditional law. The total-variation leakage is
\begin{equation}
	\label{eq:tv-leakage}
	\Lambda_{\rm TV}(P)
	=
	E_P\left[
	d_{\rm TV}\{\mathcal L_P(D\mid S^\star),\mathcal L_P(D)\}
	\right],
\end{equation}
where
\[
d_{\rm TV}(\nu,\mu)=\sup_A|\nu(A)-\mu(A)|.
\]
The expectation in \eqref{eq:tv-leakage} is over the random selected object \(S^\star\). Thus \(\Lambda_{\rm TV}(P)=0\) when \(S^\star\) is independent of \(D\). More generally, it measures how much the distribution of the inferential sample changes once the selected object is known.

We shall also use the mutual information
\[
I_P(S^\star;D)
=
{\rm KL}\!\left\{
\mathcal L_P(S^\star,D)
\,\middle\|\,
\mathcal L_P(S^\star)\otimes\mathcal L_P(D)
\right\},
\]
with the convention that it is \(+\infty\) if the joint law is not absolutely continuous with respect to the product law. By Pinsker's inequality and Jensen's inequality,
\begin{equation}
	\label{eq:tv-mi}
	\Lambda_{\rm TV}(P)
	\leq
	\{I_P(S^\star;D)/2\}^{1/2}.
\end{equation}
The next section bounds the additional selected-target noncoverage by
\(\Lambda_{\rm TV}(P)\), and hence by the mutual-information quantity in
\eqref{eq:tv-mi}.	

\section{Main bound and sharpness}
\label{sec:leakage}

We now state the main result. The theorem is a selected-target coverage bound: it starts from fixed-target validity and adds a penalty for the information carried by the selected object about the inferential sample. 
Throughout this section, assume that \(\calS\) is a standard Borel space. We also assume that the noncoverage set $\{(s,d):\theta_s(P)\notin C_s(d)\} \subseteq \calS\times\mathcal D$
is measurable for each \(P\in\calP\), where \(\mathcal D\) denotes the sample space of \(D\). A simple sufficient condition is the following: \(s\mapsto \theta_s(P)\) is measurable, and \(C_s(d)\) is an interval
$ C_s(d)=[L(s,d),U(s,d)]$ with \(L\) and \(U\) jointly measurable in \((s,d)\). Then
\[
\{(s,d):\theta_s(P)\notin C_s(d)\}
=
\{(s,d):\theta_s(P)<L(s,d)\}
\cup
\{(s,d):\theta_s(P)>U(s,d)\},
\]
which is measurable. This covers the usual confidence intervals considered below. For finite or countable \(\calS\), the condition is automatic once \(C_s(D)\) is measurable for each fixed \(s\). These are standard product-measurability conditions; see, for example,
\citet{kallenberg2002foundations}.

\begin{theorem}[Leakage bound]
	\label{thm:tv-leakage}
	Suppose that the fixed-target noncoverage bound
	\[
	\mathrm{pr}_P\{\theta_s(P)\notin C_s(D)\}\leq \alpha,
	\qquad s\in\calS,
	\]
	holds for a given \(P\in\calP\). Then
	\[
	\mathrm{pr}_P\{\theta_{S^\star}(P)\notin C_{S^\star}(D)\}
	\leq
	\alpha+\Lambda_{\rm TV}(P).
	\]
	Consequently,
	\[
	\mathrm{pr}_P\{\theta_{S^\star}(P)\notin C_{S^\star}(D)\}
	\leq
	\alpha+\{I_P(S^\star;D)/2\}^{1/2}.
	\]
\end{theorem}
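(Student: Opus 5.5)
The plan is to disintegrate the joint law of \((S^\star,D)\) along \(S^\star\), reduce the selected-target noncoverage to an average of conditional probabilities, and compare each of these with the corresponding fixed-target probability under the marginal law \(\mu_P\). Let \(B=\{(s,d):\theta_s(P)\notin C_s(d)\}\), which is measurable by the standing assumption, and for each \(s\) write \(B_s=\{d:(s,d)\in B\}\) for its section. Sections of a measurable set are measurable, so \(\mu_P(B_s)=\mathrm{pr}_P\{\theta_s(P)\notin C_s(D)\}\le\alpha\) by the fixed-target bound \eqref{eq:fixed-noncoverage}.

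Since \(\calS\) is standard Borel, a regular conditional law \(\kappa(s,\cdot)=\mathcal L_P(D\mid S^\star=s)\) exists. It therefore makes sense to work with the marginal law \(\pi\) of \(S^\star\), and the disintegration formula gives
\[
\mathrm{pr}_P\{(S^\star,D)\in B\}=\int_{\calS}\kappa(s,B_s)\,\pi(ds).
\]
The main technical point is to justify this identity, in particular the joint measurability of \(s\mapsto\kappa(s,B_s)\). This is the standard statement that \((s,d)\mapsto \ind_B(s,d)\) integrated against a kernel gives a measurable function, and that the resulting integral equals the joint probability. It is proved by a monotone class argument starting from rectangles, as in \citet{kallenberg2002foundations}.

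Once the identity is in place, I will add and subtract \(\mu_P(B_s)\) inside the integral:
\[
\kappa(s,B_s)\le \mu_P(B_s)+|\kappa(s,B_s)-\mu_P(B_s)|\le \alpha+d_{\rm TV}\{\kappa(s,\cdot),\mu_P\}.
\]
The first inequality is trivial, and the second uses \(\mu_P(B_s)\le\alpha\) together with the definition of total variation as a supremum over measurable sets. Integrating over \(\pi\) then gives \(\alpha+\Lambda_{\rm TV}(P)\) by \eqref{eq:tv-leakage}.

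The measurability of \(s\mapsto d_{\rm TV}\{\kappa(s,\cdot),\mu_P\}\) is implicitly assumed in defining \(\Lambda_{\rm TV}\). If needed, I will note that on a countably generated sample space the supremum may be taken over a countable generating algebra. Alternatively, I can use the outer integral, which does not affect the inequality.

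The mutual-information statement then follows by substituting \eqref{eq:tv-mi}. For completeness I will recall how \eqref{eq:tv-mi} arises. The chain rule gives \(I_P(S^\star;D)=\int {\rm KL}\{\kappa(s,\cdot)\|\mu_P\}\,\pi(ds)\). Pinsker's inequality bounds each total-variation term by \(\{{\rm KL}/2\}^{1/2}\), and Jensen's inequality, applied to the concave square root, moves the integral inside. If \(I_P=\infty\), the bound holds trivially.
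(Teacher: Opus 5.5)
Your proposal is correct and follows the paper's proof step for step. You disintegrate along $S^\star$ via a regular conditional law and bound each $\kappa(s,B_s)$ by $\mu_P(B_s)+d_{\rm TV}\{\kappa(s,\cdot),\mu_P\}\le\alpha+d_{\rm TV}\{\kappa(s,\cdot),\mu_P\}$. You then integrate over the law of $S^\star$ and invoke Pinsker and Jensen for the mutual-information form, and your measurability remarks fill in detail the paper leaves implicit.

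One small correction: what guarantees a regular conditional law of $D$ given $S^\star$ is the sample space $\mathcal D$ of $D$ being standard Borel, not $\calS$. This does not affect your argument, because the definition of $\Lambda_{\rm TV}$ already presupposes such a kernel.
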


\begin{proof}
	Let
	\[
	E_s=\{d:\theta_s(P)\notin C_s(d)\}.
	\]
	By the measurability assumption, \((s,d)\mapsto\ind\{d\in E_s\}\) is measurable. Let \(K_P(s,\cdot)=\mathcal L_P(D\mid S^\star=s)\) be a regular conditional law	of \(D\) given \(S^\star=s\), and let \(\mu_P=\mathcal L_P(D)\). Then
	\[
	\begin{aligned}
		\mathrm{pr}_P\{\theta_{S^\star}(P)\notin C_{S^\star}(D)\}
		&=
		E_P\{K_P(S^\star,E_{S^\star})\}.
	\end{aligned}
	\]
	For each \(s\),
	\[
	K_P(s,E_s)
	\leq
	\mu_P(E_s)+d_{\rm TV}\{K_P(s,\cdot),\mu_P\}.
	\]
	The fixed-target assumption gives \(\mu_P(E_s)\leq\alpha\). Hence
	\[
	\begin{aligned}
		\mathrm{pr}_P\{\theta_{S^\star}(P)\notin C_{S^\star}(D)\}
		&\leq
		\alpha+
		E_P\left[
		d_{\rm TV}\{K_P(S^\star,\cdot),\mu_P\}
		\right]  \\
		&=
		\alpha+\Lambda_{\rm TV}(P).
	\end{aligned}
	\]
	The mutual-information bound follows from \eqref{eq:tv-mi}.
\end{proof}

The theorem isolates the role of selection: it affects coverage only through the change from \(\mathcal L_P(D)\) to \(\mathcal L_P(D\mid S^\star)\). The zero-leakage case gives ordinary split-sample validity; positive but small leakage gives the approximate form.

The following construction shows sharpness in the unrestricted class considered by Theorem~\ref{thm:tv-leakage}. It should not be read as a model for ordinary parametric confidence intervals. The confidence sets are chosen to align their noncoverage events with the parts of the sample space most favoured by the conditional laws. Its role is only to show that, without additional structure on the procedure or on the selector, the coefficient of the leakage term cannot be reduced. 

\begin{proposition}[Sharpness]
	\label{prop:sharpness}
	Let \(0<\alpha\leq 1/2\) and \(0\leq\delta\leq\alpha\). Let
	\(\mathcal D=\{a_0,a_1,b\}\). Define two probability measures on \(\mathcal D\)
	by
	\[
	\begin{array}{lll}
		\mu_0(a_0)=\alpha+\delta, &
		\mu_0(a_1)=\alpha-\delta, &
		\mu_0(b)=1-2\alpha,\\[2mm]
		\mu_1(a_0)=\alpha-\delta, &
		\mu_1(a_1)=\alpha+\delta, &
		\mu_1(b)=1-2\alpha .
	\end{array}
	\]
	Let \(\mathrm{pr}(S^\star=0)=\mathrm{pr}(S^\star=1)=1/2\), and let
	$
	\mathcal L(D\mid S^\star=j)=\mu_j,\;j=0,1.
	$
	Then the marginal law \(\mu=\mathcal L(D)\) satisfies
	\[
	\mu(a_0)=\mu(a_1)=\alpha,\qquad \mu(b)=1-2\alpha,
	\]
	and
	\[
	d_{\rm TV}(\mu_0,\mu)=d_{\rm TV}(\mu_1,\mu)=\delta.
	\]
	There exist targets \(\theta_0,\theta_1\) and confidence sets \(C_0(D),C_1(D)\)
	such that
	\[
	\mathrm{pr}_{\mu}\{\theta_j\notin C_j(D)\}=\alpha,\qquad j=0,1,
	\]
	but
	\[
	\mathrm{pr}\{\theta_{S^\star}\notin C_{S^\star}(D)\}
	=
	\alpha+
	\frac12 d_{\rm TV}(\mu_0,\mu)
	+
	\frac12 d_{\rm TV}(\mu_1,\mu).
	\]
	Thus the bound in Theorem~\ref{thm:tv-leakage} is attained in the unrestricted
	class of confidence procedures and screening rules.
\end{proposition}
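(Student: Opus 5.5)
The proof is a direct finite computation in three steps: compute the marginal, compute the two total variation distances, and then choose noncoverage events that line up with the atoms each conditional law favours.

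\emph{Marginal.} Since $\mathrm{pr}(S^\star=j)=1/2$, we have $\mu=\tfrac12\mu_0+\tfrac12\mu_1$. Averaging the two rows of the table gives $\mu(a_0)=\mu(a_1)=\alpha$ and $\mu(b)=1-2\alpha$. The assumptions $\alpha\le 1/2$ and $0\le\delta\le\alpha$ make every entry of $\mu_0$, $\mu_1$ and $\mu$ nonnegative, and each row sums to one, so all three are genuine probability measures.

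\emph{Total variation.} On a finite space, $d_{\rm TV}(\nu,\mu)=\tfrac12\sum_x|\nu(x)-\mu(x)|$, and the supremum is attained at $A=\{x:\nu(x)>\mu(x)\}$. For $\mu_0$ the pointwise differences from $\mu$ are $(+\delta,-\delta,0)$, so $d_{\rm TV}(\mu_0,\mu)=\delta$, attained at $A=\{a_0\}$. By symmetry $d_{\rm TV}(\mu_1,\mu)=\delta$, attained at $\{a_1\}$. In particular the leakage is $\Lambda_{\rm TV}=\delta$.

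\emph{Construction.} Choose $\theta_0=\theta_1=0$. Define $C_0(d)=\Rr$ for $d\neq a_0$ and $C_0(a_0)=\{1\}$. Define $C_1(d)=\Rr$ for $d\ne a_1$ and $C_1(a_1)=\{1\}$. With these choices the noncoverage events are $E_0=\{a_0\}$ and $E_1=\{a_1\}$. Any targets and sets with these noncoverage events would work. Under the marginal law, $\mathrm{pr}_\mu(E_j)=\alpha$ for $j=0,1$, so the fixed-target condition holds with equality. The selected-target noncoverage is then
\[
\tfrac12\mu_0(a_0)+\tfrac12\mu_1(a_1)=\tfrac12(\alpha+\delta)+\tfrac12(\alpha+\delta)=\alpha+\delta ,
\]
which equals $\alpha+\tfrac12 d_{\rm TV}(\mu_0,\mu)+\tfrac12 d_{\rm TV}(\mu_1,\mu)$. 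This is exactly $\alpha+\Lambda_{\rm TV}$, so the bound of Theorem~\ref{thm:tv-leakage} is attained.

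No step is a real obstacle. The only care needed is in the design: each $E_j$ has to be the set that attains the total variation supremum for $\mu_j$ against $\mu$, while still having $\mu$-probability exactly $\alpha$. The table is built so that both requirements are met by a single atom.
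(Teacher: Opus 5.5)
Your proposal is correct and follows essentially the same route as the paper's proof: the same marginal and total-variation computations, and the same construction with $\theta_0=\theta_1=0$ and noncoverage events $\{a_0\}$ and $\{a_1\}$, giving selected-target noncoverage $\tfrac12\mu_0(a_0)+\tfrac12\mu_1(a_1)=\alpha+\delta$. The only difference is cosmetic: the paper sets $C_j=\varnothing$ on the bad atom and $\{0\}$ elsewhere, while you use $\{1\}$ and $\Rr$, and either choice yields the same noncoverage events.
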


We provide the proof of the above proposition in Appendix~\ref{app:proofs}.

\section{Noisy screening and calibrated inference}
\label{sec:noisy}

The leakage bound becomes operational when the dependence between the selected object and the inferential data can be bounded by design. One simple route is to allow the screening rule to see only a noisy summary of the inferential sample.

Suppose the screening rule does not observe \(D\) directly. Instead, it observes $W=T(D)+\xi,$ where \(T(D)\in\Rr^q\), \(\xi\sim N(0,\tau^2 I_q)\), and \(\xi\) is independent of \(D\). The selected object is
$S^\star=\calA(W,U),$ where \(U\) is any additional randomization independent of \((D,\xi)\). The map \(\calA\) may be arbitrary. The entropy calculation is the standard covariance-constrained Gaussian channel bound; see, for example, \citet[Chapter~9]{cover2006elements}.

\begin{proposition}[Gaussian noisy screening]
	\label{prop:gaussian-noisy-screening}
	Assume that \(\operatorname{cov}_P\{T(D)\}\) exists, and write
	\[
	\Sigma_T(P)=\operatorname{cov}_P\{T(D)\}.
	\]
	If the fixed-target confidence sets satisfy
	\[
	\mathrm{pr}_P\{\theta_s(P)\notin C_s(D)\}\leq \alpha,
	\qquad s\in\calS ,
	\]
	then
	\[
	\mathrm{pr}_P\{\theta_{S^\star}(P)\notin C_{S^\star}(D)\}
	\leq
	\alpha+
	\left[
	\frac14
	\log\det\{I_q+\tau^{-2}\Sigma_T(P)\}
	\right]^{1/2}.
	\]
	In particular, if \(\operatorname{tr}\{\Sigma_T(P)\}\leq v\), then
	\[
	\mathrm{pr}_P\{\theta_{S^\star}(P)\notin C_{S^\star}(D)\}
	\leq
	\alpha+
	\left[
	\frac{q}{4}
	\log\left\{1+\frac{v}{q\tau^2}\right\}
	\right]^{1/2}.
	\]
\end{proposition}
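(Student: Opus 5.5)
The plan is to reduce the statement to the mutual-information form of Theorem~\ref{thm:tv-leakage} and then bound \(I_P(S^\star;D)\) by the capacity of an additive Gaussian channel. By Theorem~\ref{thm:tv-leakage},
\[
\mathrm{pr}_P\{\theta_{S^\star}(P)\notin C_{S^\star}(D)\}\le \alpha+\{I_P(S^\star;D)/2\}^{1/2}.
\]
It therefore suffices to show that
\[
I_P(S^\star;D)\le \tfrac12\log\det\{I_q+\tau^{-2}\Sigma_T(P)\}.
\]
Substituting this gives exactly the factor \(\tfrac14\) inside the square root.

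\textbf{Step 1: data processing.} Since \(S^\star=\calA(W,U)\) with \(U\) independent of \((D,\xi)\), we have the Markov chain \(D\to T(D)\to W\to S^\star\). Indeed, \(W=T(D)+\xi\), and \(\xi\) and \(U\) are independent of \(D\) and of each other. The data-processing inequality for mutual information, stated as KL between joint and product laws, then gives
\[
I_P(S^\star;D)\le I_P(W;D)=I_P(W;T(D)).
\]
The equality holds because \(W\) depends on \(D\) only through \(T(D)\).

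\textbf{Step 2: entropy decomposition.} Write \(X=T(D)\). Conditional on \(X\), \(W\) is \(N(X,\tau^2 I_q)\), so \(W\) has a Lebesgue density whatever the law of \(X\). We then have
\[
I(W;X)=h(W)-h(W\mid X)=h(W)-\tfrac q2\log(2\pi e\tau^2).
\]
Since \(\operatorname{cov}(W)=\Sigma_T(P)+\tau^2 I_q\), the maximum-entropy property of the Gaussian under a covariance constraint (Cover--Thomas, Ch.~9) gives
\[
h(W)\le \tfrac12\log\{(2\pi e)^q\det(\Sigma_T+\tau^2 I_q)\}.
\]
Subtracting the two expressions yields \(\tfrac12\log\det(I_q+\tau^{-2}\Sigma_T)\).

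\textbf{Step 3: trace version.} Let \(\lambda_1,\ldots,\lambda_q\ge0\) be the eigenvalues of \(\Sigma_T\). Then
\[
\log\det(I_q+\tau^{-2}\Sigma_T)=\sum_i\log(1+\lambda_i/\tau^2).
\]
By concavity of \(\log\) (Jensen or AM--GM), this is at most \(q\log\{1+\operatorname{tr}(\Sigma_T)/(q\tau^2)\}\le q\log\{1+v/(q\tau^2)\}\). Inserting this into the first display gives the second inequality.

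\textbf{Main obstacle.} The delicate point is justifying Step 2 in full generality. The law of \(X\) may be singular or discrete, so \(h(X)\) need not exist; the argument must therefore only use \(h(W)\) and \(h(W\mid X)\), both of which are finite because of the Gaussian smoothing and the finite covariance. One should also check that \(I(W;X)=h(W)-h(W\mid X)\) holds without \(X\) having a density. To do this I would write \(I(W;X)\) as the expected KL divergence between \(N(X,\tau^2I_q)\) and the law of \(W\), both of which have densities, so the identity follows by direct computation. The measurability needed for data processing with the black-box \(\calA\) is routine, given the standard Borel assumption on \(\calS\).
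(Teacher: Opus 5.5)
Your proposal is correct and follows essentially the same route as the paper. Both arguments use data processing to pass from $S^\star$ to $W$, the reduction of $I_P(W;D)$ to $I_P(W;T)$, the Gaussian maximum-entropy bound $h(W)-h(\xi)\le \tfrac12\log\det\{I_q+\tau^{-2}\Sigma_T(P)\}$, and concavity of $\log$ for the trace version. Your extra step of justifying $I(W;X)=h(W)-h(W\mid X)$ through the expected KL divergence, when $T(D)$ has no density, is useful rigor that the paper leaves implicit.
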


We provide the proof of the above proposition in the Appendix. In applications, \(\Sigma_T(P)\) may be bounded by design. For example, if the components of \(T(D)\) are known to lie in intervals of lengths \(B_1,\ldots,B_q\), then \(\operatorname{Var}\{T_j(D)\}\leq B_j^2/4\), giving
\[
\operatorname{tr}\{\Sigma_T(P)\}
\leq
\frac14\sum_{j=1}^q B_j^2 .
\]
Alternatively, a conservative pilot or design-based upper bound on
\(\operatorname{tr}\{\Sigma_T(P)\}\) may be used to choose the noise level \(\tau\). The bound is intended as a design-based calibration principle, not as an after-the-fact estimate from a single dataset. If the screening summary is low-dimensional, or if its covariance is small relative to the added noise, then the coverage loss is controlled. The downstream screening map \(\calA\) does not enter the bound; all information about \(D\) available to the selector must pass
through \(W\). A second route is to restrict the screening rule to a finite or compressed summary of the inferential data. Appendix~\ref{app:compressed} records a simple finite-message bound that illustrates another way in which leakage can be controlled by design.

Sample splitting is the limiting case in which the selector receives no
information from \(D\). If \(S^\star\) is computed from an independent screening sample, then \(I_P(S^\star;D)=0\), and Theorem~\ref{thm:tv-leakage} gives
\[
\mathrm{pr}_P\{\theta_{S^\star}(P)\notin C_{S^\star}(D)\}\leq \alpha.
\]
More generally, for a sequence of problems, if
\[
\sup_{P\in\calP}\sup_{s\in\calS}
\mathrm{pr}_P\{\theta_s(P)\notin C_{s,m}(D_m)\}
\leq \alpha+r_m,
\qquad r_m\to0,
\]
and
\[
\sup_{P\in\calP}I_P(S_m^\star;D_m)\leq \eta_m,
\qquad \eta_m\to0,
\]
then
\[
\sup_{P\in\calP}
\mathrm{pr}_P\{\theta_{S_m^\star}(P)\notin C_{S_m^\star,m}(D_m)\}
\leq
\alpha+r_m+(\eta_m/2)^{1/2}.
\]
Thus ordinary fixed-target validity extends to selected-target validity whenever the selected object carries asymptotically negligible information about the inferential sample. 

\section{A small illustration}
\label{sec:illustration}

We give a small numerical illustration of the leakage principle. The purpose is not to compare methods, but to show the qualitative behaviour predicted by the bound: same-sample screening can undercover, independent splitting restores coverage, and noisy screening gives an intermediate regime.

For each replication, generate
\[
X_i=(X_{i1},\ldots,X_{ip})^\top,\qquad i=1,\ldots,n,
\]
with independent \(N(0,1)\) coordinates. We take \(n=400\) and \(p=50\). For \(j=1,\ldots,p\), define the population contrast
\[
\theta_j=E(X_{ij})=0.
\]
The screening rule selects one coordinate. The reported confidence interval is the usual normal interval for the mean of the selected coordinate,
\[
\widehat\theta_j\pm z_{\alpha/2}n^{-1/2},
\qquad \alpha=0.05.
\]
If \(j\) were fixed in advance, this interval has exact coverage \(1-\alpha\).

We compare four screening designs. In the same-sample design, the selected coordinate is
\[
S^\star=\arg\max_{1\leq j\leq p}\widehat\theta_j,
\]
where \(\widehat\theta_j=n^{-1}\sum_{i=1}^n X_{ij}\), and the interval is
computed from the same data. In the split-sample design, half the observations are used to select the coordinate and the other half are used to form the interval. In the noisy-screening design, the selector observes
\[
W_j=\widehat\theta_j+\xi_j,\qquad
\xi_j\sim N(0,\tau^2),
\]
independently over \(j\), and selects the coordinate with the largest \(W_j\); the confidence interval is still computed from the original data. We vary
\(\tau\).

Table~\ref{tab:illustration} reports empirical coverage over \(10,000\) replications. The same-sample selector substantially undercovers, because it preferentially chooses a coordinate with a large positive noise fluctuation. In this example, even \(\tau=0.25\) is large relative to the standard error \(1/\sqrt{400}=0.05\) of each coordinate mean; hence the noisy selector is already close to being decoupled from the inferential statistic. Coverage consequently returns rapidly
towards the nominal level as noise is added. 

\begin{table}[h]
	\centering
	\caption{Empirical coverage for the selected coordinate mean, based on 10,000 replications.}
	\label{tab:illustration}
	\begin{tabular}{lcc}
		\hline
		Screening design & Empirical coverage & Monte Carlo s.e. \\
		\hline
		No selection; fixed coordinate & 0.95 & 0.002 \\
		Same-sample selection & 0.28 & 0.004 \\
		Noisy screening, $\tau = 0.25$ & 0.93 & 0.003 \\
		Noisy screening, $\tau = 0.50$ & 0.95 & 0.002 \\
		Noisy screening, $\tau = 1.00$ & 0.95 & 0.002 \\
		Split-sample selection & 0.95 & 0.002 \\
		\hline
	\end{tabular}
\end{table}

The contrast between the first two nontrivial rows is the post-selection effect: using the same data to select the largest empirical mean and to form the confidence interval destroys coverage. Sample splitting removes this dependence. Noisy screening gives an intermediate design. When the added noise is large relative to the sampling variation in the screening statistics, the selected coordinate carries little information about the inferential fluctuation, and the coverage is close to the fixed-target level. This is consistent with the leakage bound and with Proposition~\ref{prop:gaussian-noisy-screening}.  For the same-sample selector, the coverage can be computed exactly. If \(M_p=\max_{1\leq j\leq p}\sqrt n\,\widehat\theta_j\), then \(M_p\) is the maximum of \(p\) independent standard normal variables. The selected interval covers zero exactly when \(M_p\leq z_{\alpha/2}\), so the coverage is \(
\Phi(z_{\alpha/2})^p.\) With \(p=50\) and \(\alpha=0.05\), this equals approximately \(0.28\), matching the simulation.

The information bound is not intended to be numerically sharp in this toy
example. The table illustrates the qualitative decoupling effect of noisy
screening: once the added noise is large relative to the sampling variability of the coordinate means, the selected coordinate carries much less information about the inferential fluctuation.

\section{Discussion}
\label{sec:discussion}

This paper considers post-selection coverage when the selection rule is not available as a tractable mathematical object. The main result shows that fixed-target coverage can be transferred to the selected target with an additional leakage term: the average total variation distance between the marginal law of the inferential data and its conditional law given the selected object. The statement is finite-sample and does not require the selection event itself to be characterized.

The main contribution is a reduction of black-box post-selection inference to a dependence problem. Selective inference usually exploits a known selection event, while PoSI-type inference protects against many possible selected models by enlarging intervals. Here the fixed-target procedure is kept unchanged, and the extra cost is measured through the dependence between the selected object and the inferential data. This is useful when the screening step is stochastic, proprietary, interactive, or otherwise difficult to express analytically.

In applications, the leakage term is best treated as a design quantity. It can be reduced by separating screening from inference, or by restricting what the screening step can learn through noise, compression, privacy, or stability constraints. The noisy-screening result and the finite-message example illustrate two such controls, but the same principle can be built into other workflows. The coverage statement should also be read narrowly. It concerns the target attached to the object actually reported; it does not validate the scientific importance or stability of that object. Nor does it supply fixed-target validity, which remains a separate issue in high-dimensional, semiparametric, or nuisance-estimation settings.

Several refinements remain open. The total-variation bound is deliberately general and may be conservative for structured procedures such as one-sided intervals, Wald intervals, likelihood-ratio regions, or conformal sets. Other dependence measures, including max-information, maximal leakage, differential privacy, or algorithmic stability, may give sharper bounds for particular screening mechanisms. A useful next step is to develop analysis protocols that choose the amount of randomization, compression, or splitting needed to keep the leakage below a prescribed tolerance.

\bibliographystyle{apalike}
\bibliography{ref.bib}

\appendix

\section{Proofs of results}
\label{app:proofs}

\begin{proof}[Proof of Proposition~\ref{prop:sharpness}]
	Take \(\theta_0=\theta_1=0\), and define
	\[
	C_0(D)=
	\begin{cases}
		\varnothing, & D=a_0,\\
		\{0\}, & D\neq a_0,
	\end{cases}
	\qquad
	C_1(D)=
	\begin{cases}
		\varnothing, & D=a_1,\\
		\{0\}, & D\neq a_1 .
	\end{cases}
	\]
	Under the marginal law \(\mu\),
	\[
	\mathrm{pr}_{\mu}\{\theta_0\notin C_0(D)\}=\mu(a_0)=\alpha,
	\qquad
	\mathrm{pr}_{\mu}\{\theta_1\notin C_1(D)\}=\mu(a_1)=\alpha.
	\]
	Thus the two fixed-target confidence sets have noncoverage exactly \(\alpha\).
	
	For \(\mu_0\), the signed differences from \(\mu\) are \(+\delta\) at \(a_0\),
	\(-\delta\) at \(a_1\), and \(0\) at \(b\). Hence
	\[
	d_{\rm TV}(\mu_0,\mu)
	=
	\sup_A|\mu_0(A)-\mu(A)|
	=
	\delta,
	\]
	with the supremum attained at \(A=\{a_0\}\). The same argument gives
	\(d_{\rm TV}(\mu_1,\mu)=\delta\).
	
	Moreover,
	\[
	\text{pr}\{\theta_{S^\star}\notin C_{S^\star}(D)\}
	=
	\frac12\mu_0(a_0)+\frac12\mu_1(a_1)
	=
	\frac12(\alpha+\delta)+\frac12(\alpha+\delta)
	=
	\alpha+\delta.
	\]
	
	Hence
	\[
	\alpha+\delta
	=
	\alpha+
	\frac12 d_{\rm TV}(\mu_0,\mu)
	+
	\frac12 d_{\rm TV}(\mu_1,\mu),
	\]
	which is the bound in Theorem~\ref{thm:tv-leakage}.
\end{proof}

\begin{proof}[Proof of Proposition~\ref{prop:gaussian-noisy-screening}]
	Since \(S^\star\) is a measurable function of \((W,U)\), with \(U\) independent of \(D\), the data-processing inequality gives
	\[
	I_P(S^\star;D)\leq I_P(W;D).
	\]
	Let \(T=T(D)\). Since \(T\) is a measurable function of \(D\), and
	\(W=T+\xi\) with \(\xi\) independent of \(D\), we have
	\(W\perp D\mid T\). Hence, by the chain rule for mutual information,
	\[
	I_P(W;D)
	=
	I_P(W;T)+I_P(W;D\mid T)
	=
	I_P(W;T).
	\]
	Moreover,
	\[
	I_P(W;T)
	=
	h_P(W)-h_P(W\mid T)
	=
	h_P(W)-h(\xi).
	\]
	The random vector \(W\) has covariance \(\operatorname{cov}_P(W)=\Sigma_T(P)+\tau^2 I_q.\)
	Since Gaussian distributions maximize differential entropy at fixed covariance,
	\[
	h_P(W)
	\leq
	\frac12\log\{(2\pi e)^q\det(\Sigma_T(P)+\tau^2 I_q)\}.
	\]
	Also
	\[
	h(\xi)
	=
	\frac12\log\{(2\pi e)^q\det(\tau^2 I_q)\}.
	\]
	Therefore
	\[
	I_P(W;D)
	\leq
	\frac12
	\log\det\{I_q+\tau^{-2}\Sigma_T(P)\}.
	\]
	Combining this with Theorem~\ref{thm:tv-leakage} and the mutual-information bound gives
	\[
	\mathrm{pr}_P\{\theta_{S^\star}(P)\notin C_{S^\star}(D)\}
	\leq
	\alpha+
	\left[
	\frac14
	\log\det\{I_q+\tau^{-2}\Sigma_T(P)\}
	\right]^{1/2}.
	\]
	
	For the trace bound, let \(\lambda_1,\ldots,\lambda_q\) be the eigenvalues of \(\Sigma_T(P)\). Then
	\[
	\log\det\{I_q+\tau^{-2}\Sigma_T(P)\}
	=
	\sum_{j=1}^q\log(1+\lambda_j/\tau^2).
	\]
	By concavity of \(x\mapsto\log(1+x)\),
	\[
	\sum_{j=1}^q\log(1+\lambda_j/\tau^2)
	\leq
	q\log\left\{
	1+\frac{1}{q\tau^2}\sum_{j=1}^q\lambda_j
	\right\}.
	\]
	If \(\operatorname{tr}\{\Sigma_T(P)\}\leq v\), the last display is bounded by
	\[
	q\log\left\{1+\frac{v}{q\tau^2}\right\}.
	\]
	This proves the stated trace bound.
\end{proof}

\section{A compressed-screening example}
\label{app:compressed}

This appendix records a simple finite-message version of the leakage principle. It is included to show that the bound is not tied to Gaussian perturbation. If a screening rule can only observe a coarse summary of the inferential data, then the selected object cannot carry more information about the inferential sample than that summary.

Let \(W=\varphi(D)\) be a measurable summary of the inferential data taking
values in a finite set \(\mathcal W\). The selected object is
\[
S^\star=\mathcal A(W,U),
\]
where \(U\) is additional randomization independent of \(D\). The map
\(\mathcal A\) may be arbitrary.

\begin{proposition}[Finite-message screening]
	\label{prop:finite-message}
	Suppose \(W=\varphi(D)\) takes values in a finite set \(\mathcal W\), and
	\(S^\star=\mathcal A(W,U)\), with \(U\) independent of \(D\). If the fixed-target
	confidence sets satisfy
	\[
	\operatorname{pr}_P\{\theta_s(P)\notin C_s(D)\}\leq \alpha,
	\qquad s\in\mathcal S,
	\]
	then
	\[
	\operatorname{pr}_P\{\theta_{S^\star}(P)\notin C_{S^\star}(D)\}
	\leq
	\alpha+
	\left\{\frac{1}{2}H_P(W)\right\}^{1/2},
	\]
	where \(H_P(W)\) is the entropy of \(W\), measured in nats. In particular,
	\[
	\operatorname{pr}_P\{\theta_{S^\star}(P)\notin C_{S^\star}(D)\}
	\leq
	\alpha+
	\left\{\frac{1}{2}\log |\mathcal W|\right\}^{1/2}.
	\]
\end{proposition}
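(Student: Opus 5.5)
The plan is to reduce Proposition~\ref{prop:finite-message} to the mutual-information form of Theorem~\ref{thm:tv-leakage}, exactly as in the proof of Proposition~\ref{prop:gaussian-noisy-screening}. By Theorem~\ref{thm:tv-leakage}, the selected-target noncoverage is at most \(\alpha+\{I_P(S^\star;D)/2\}^{1/2}\). It therefore suffices to show that
\[
I_P(S^\star;D)\leq H_P(W)\leq \log|\mathcal W| .
\]

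\emph{Step 1: data processing.} Since \(U\) is independent of \(D\) and \(W=\varphi(D)\) is a function of \(D\), we have \(U\perp D\mid W\). More precisely, \((W,U)\) has the law of \(W\) paired with an independent \(U\), so \(D\to W\to (W,U)\to S^\star\) is a Markov chain. The data-processing inequality then gives \(I_P(S^\star;D)\leq I_P(W,U;D)\). By the chain rule,
\[
I_P(W,U;D)=I_P(W;D)+I_P(U;D\mid W).
\]
Because \(U\) is independent of \(D\) jointly with \(W\) (as \(W\) is a function of \(D\)), the last term vanishes. Hence \(I_P(S^\star;D)\leq I_P(W;D)\).

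\emph{Step 2: discrete entropy bound.} Since \(W\) is a deterministic function of \(D\) and takes finitely many values, \(I_P(W;D)=H_P(W)-H_P(W\mid D)=H_P(W)\), because \(H_P(W\mid D)=0\). Entropy in the discrete case is always nonnegative, so this step is safe even though \(D\) is general. Equivalently, \(I_P(W;D)\leq H_P(W)\) holds for any discrete \(W\). Finally, \(H_P(W)\leq\log|\mathcal W|\) because the uniform distribution maximizes entropy on a finite set. Substituting these two bounds into Theorem~\ref{thm:tv-leakage} yields both displayed inequalities.

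The main obstacle is measure-theoretic care in Step 1 when \(D\) lives on a general space. One must justify the identity \(I(W;D)=H(W)\) with the KL definition of mutual information used in the paper. I would handle this directly: the joint law of \((W,D)\) has density \(\ind\{w=\varphi(d)\}/\mathrm{pr}_P(W=w)\) with respect to \(\mathcal L_P(W)\otimes\mathcal L_P(D)\). Integrating its logarithm gives exactly \(-E_P\log \mathrm{pr}_P(W=\varphi(D))=H_P(W)\). The independence of \(U\) is handled by the chain rule (or by conditioning on \(U=u\) and averaging, using convexity), which I expect to be routine.
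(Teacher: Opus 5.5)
Your proposal is correct and follows the paper's proof step for step. It uses data processing to get $I_P(S^\star;D)\leq I_P(W;D)$, the identity $I_P(W;D)=H_P(W)$ for a deterministic finite-valued summary, the bound $H_P(W)\leq\log|\mathcal W|$, and then the mutual-information form of Theorem~\ref{thm:tv-leakage}. Your chain-rule argument through $(W,U)$ and the explicit density $\ind\{w=\varphi(d)\}/\mathrm{pr}_P(W=w)$ just supply details the paper leaves implicit.
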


\begin{proof}
	Since \(S^\star\) is a measurable function of \((W,U)\), with \(U\) independent
	of \(D\), the data-processing inequality gives
	\[
	I_P(S^\star;D)\leq I_P(W;D).
	\]
	Because \(W=\varphi(D)\) is a deterministic function of \(D\),
	\[
	I_P(W;D)=H_P(W).
	\]
	The mutual-information form of Theorem~\ref{thm:tv-leakage} therefore gives
	\[
	\operatorname{pr}_P\{\theta_{S^\star}(P)\notin C_{S^\star}(D)\}
	\leq
	\alpha+\{H_P(W)/2\}^{1/2}.
	\]
	Finally, \(H_P(W)\leq \log|\mathcal W|\), which proves the second bound.
\end{proof}

The bound is useful only when the summary alphabet is small enough for the entropy term to be nontrivial. Its role is mainly conceptual: it shows that leakage can be controlled not only by adding noise, as in
Proposition~\ref{prop:gaussian-noisy-screening}, but also by restricting the amount of information released to the screening rule. A sharper version can be obtained whenever a problem-specific entropy bound on \(W\) is available.

\end{document}